\documentclass[12pt, reqno]{amsart}
\usepackage{amsmath, amsthm, amscd, amsfonts, amssymb, graphicx, color}
\usepackage{setspace}
\usepackage{multicol}
\usepackage[bookmarksnumbered, colorlinks, plainpages]{hyperref}
\hypersetup{colorlinks=true,linkcolor=red, anchorcolor=green, citecolor=cyan, urlcolor=red, filecolor=magenta, pdftoolbar=true}

\textheight 22.5truecm \textwidth 14.5truecm
\setlength{\oddsidemargin}{0.35in}\setlength{\evensidemargin}{0.35in}

\setlength{\topmargin}{-.5cm}

\newtheorem{theorem}{Theorem}[section]
\newtheorem{lemma}[theorem]{Lemma}

\theoremstyle{definition}

\theoremstyle{remark}

\numberwithin{equation}{section}

\newcommand{\NN}{\mathbb{N}}

\newcommand{\CC}{\mathbb {C}}

\begin{document}
\setcounter{page}{1}
\title[Spectral properties  of Volterra-type integral operators on Fock--Sobolev  spaces]{Spectral properties of  Volterra-type integral  operators on Fock--Sobolev  spaces }
\author [Tesfa  Mengestie]{Tesfa  Mengestie }
\address{Department of Mathematical Sciences \\
Western Norway University of Applied Sciences\\
Klingenbergvegen 8, N-5414 Stord, Norway}
\email{Tesfa.Mengestie@hvl.no}

\thanks{The  author  is partially supported by HSH grant 1244/ H15.}
\subjclass[2010]{Primary 47B32, 30H20; Secondary 46E22,46E20,47B33 }
 \keywords{Fock space, Fock--Sobolov spaces, Bounded, Compact, Volterra integral, Multiplication operators, Schatten class, Spectrum, Generalized Fock spaces }
\begin{abstract}
We study some spectral properties of Volterra-type integral operators $V_g$ and $I_g$  with holomorphic symbol $g$ on the  Fock--Sobolev spaces $\mathcal{F}_{\psi_m}^p$. We showed that $V_g$ is bounded on $\mathcal{F}_{\psi_m}^p$ if and only if $g$ is a complex polynomial of degree not
exceeding two, while compactness of $V_g$ is described by degree of $g$ being not bigger than one. We also identified  all those  positive numbers $p$ for which the operator $V_g$ belongs to the Schatten  $\mathcal{S}_p$ classes. Finally, we characterize the spectrum of  $V_g$ in terms of a closed disk of radius twice   the coefficient of the highest degree  term in a polynomial expansion of  $g$.
\end{abstract}

\maketitle

\section{Introduction} \label{1}
The boundedness and compactness properties of  integral  operators are very well studied objects in operator related function-theories.  They have been studied for a broad class of operators on various spaces of holomorphic  functions including the Hardy spaces \cite{ALC, Alsi1,Jord}, Bergman spaces \cite{JPP,JPP1,JR}, Fock spaces \cite{Olivia1, Olivia, TM1, TM, TM0,TM2, TM3}, Dirichlet spaces \cite{Alsi2,GGP,GP}, Model spaces \cite{TMM},  and logarithmic Bloch spaces \cite{XH}. Yet, they still constitute an active area of research because of their multifaceted implications. Typical examples of operators  subjected  to  this phenomena are  the Volterra-type integral operator $V_g$ and its companion $I_g$,  defined by
 \begin{align*}
 V_gf(z)= \int_0^z f(w)g'(w) dw \ \ \ \text{and} \ \  \ I_gf(z)= \int_0^z f'(w)g(w) dw,
\end{align*} where $g$ is a holomorphic symbol. Applying integration by parts in any one of the above integrals
gives the relation
 \begin{align}
\label{parts}V_g f+ I_g f= M_g f-f(0)g(0),
\end{align} where $M_g
f= g f$ is the multiplication operator of symbol $g$. On   the classical Fock spaces with the  Gaussian weight, some spectral structures of   these operators were studied by several  authors for example  in \cite{Olivia1,TM,TM1,TM0,TM2}. On the other hand, when the weight decays faster than the classical Gaussian  weight, they were recently studied in \cite{Olivia, TM3}. From the results in these two later works, we observed that  while the operator $V_g$ enjoys a richer structure when it acts between weighted Fock spaces of faster decaying weights  in contrast to its action on  the classical Fock spaces, the analogues  structures for $I_g$ and $M_g$ has  got rather poorer.  A natural question is then what happens  to these structures when the weight decays slower than the classical Gaussian  weight?  The central aim of this paper is to investigate this situation. Prototype  examples of  spaces generated by such  slower decaying weights, which we are interested in,  are  the Fock--Sobolev spaces as described below.

 Let $m$  be  any nonnegative  integer and $0<p<\infty$. Then,  the Fock--Sobolev spaces  $\mathcal{F}_{(m,p)}$ consist of entire functions $f$ such that $f^{(m)}$, the m-th order derivative of $f$, belongs to the classical Fock spaces $\mathcal{F}_p;$
 which consist of all entire functions $f$ for which
\begin{align*}
\int_{\CC}
|f(z)|^p  e^{-\frac{p}{2}|z|^2} dA(z) <\infty.
\end{align*}
 The Fock--Sobolev spaces were introduced in \cite{RCKZ} and  it was proved that  $f$ belongs to $\mathcal{F}_{(m, p)}$ if and only if  the function $z \mapsto |z|^m f(z)$ belongs to $L^p(\CC, e^{-p|z|^2 /2})$. By  closed graph theorem argument, we have that $f$ belongs to $\mathcal{F}_{(m, p)}$ if and only if $z \mapsto (\beta+ |z|)^m f(z)$ belongs to  $L^p(\CC, e^{-p|z|^2 /2})$ for any positive number $\beta.$   A consequence of this is that the norm in  $\mathcal{F}_{(m,p)}$ is comparable to the quantity
\begin{equation*}
 \bigg(C_{(p,m)} \int_{\CC}
|f(z)|^p (1+|z|)^{mp} e^{-\frac{p}{2} |z|^2 } dA(z)\bigg)^{1/p}
\end{equation*}
for $ 0 < p <\infty$,  and
\begin{equation*}
C_{(m, p)}= ( p/2)^{\frac{mp}{2}+ 1}\big(
\pi \Gamma Γ\Big(\frac{mp}{2}+ 1\big)\Big)^{-1},
\end{equation*} where $ \Gamma$ denotes the Gamma function,  $dA$ denotes the
usual Lebesgue area  measure on $\CC$, and  we  fix  $\beta \simeq  1$  for simplicity.
  To put the spaces into weighted/generalized  Fock spaces context,  we may now set the sequence of the corresponding weight functions as
\begin{align}
\label{weight}
\psi_m(z)= \frac{1}{2}|z|^2-m\log(1+ |z|),
 \end{align}
and observe that the
 Fock--Sobolev spaces $\mathcal{F}_{(m, p)}$ are just the weighted Fock spaces   $\mathcal{F}_{\psi_m}^p$ which consist of all entire functions $f$ for which \footnote{The  notation $U(z)\lesssim V(z)$ (or
equivalently $V(z)\gtrsim U(z)$) means that there is a constant
$C$ such that $U(z)\leq CV(z)$ holds for all $z$ in the set of a
question. We write $U(z)\simeq V(z)$ if both $U(z)\lesssim V(z)$
and $V(z)\lesssim U(z)$.}
\begin{align*}
%\label{normequal}
\int_{\CC}
|f(z)|^p  e^{-p\psi_m(z) } dA(z)\simeq  \|f\|_{(p,m)}^p <\infty. \end{align*}
We may  now state our first  main result.
\begin{theorem}\label{thm1}
Let $g$ be an entire function on $\CC$ and $0<p,q< \infty$. Then if
\begin{enumerate}
\item $0<p\leq q< \infty$, then $V_g: \mathcal{F}_{\psi_m}^p \to \mathcal{F}_{\psi_m}^q$ is
\begin{enumerate}
\item  bounded if and only if $g (z)= az^2+bz+c,  \ a, b, c\in \CC$.% is a complex polynomial of degree not exceeding two.
\item  compact if and only if $g (z)= az+b, \  a, b, \in \CC $.% is a complex polynomial of degree not exceeding one.
\end{enumerate}
\item $0<q<p< \infty$, then  the following statements are equivalent
\begin{enumerate}
\item $V_g: \mathcal{F}_{\psi_m}^p \to \mathcal{F}_{\psi_m}^q$ is   bounded;
\item $V_g: \mathcal{F}_{\psi_m}^p \to \mathcal{F}_{\psi_m}^q$ is   compact;
\item $g(z)= az+b$  whenever  $\frac{q}{2}>\frac{p-q}{p}$, and g= constant otherwise.
\end{enumerate}
\item $0<p<\infty$ and $V_g$ compact on $ \mathcal{F}_{\psi_m}^2$, then $V_g$  belongs to the Schatten $\mathcal{S}_p(\mathcal{F}_{\psi_m}^2)$ classes for all $p>2$. On the other hand, if $0<p<2,$  then $V_g$ belongs to $\mathcal{S}_p(\mathcal{F}_{\psi_m}^2)$ if and only if $g$ is the zero function.
\end{enumerate}
\end{theorem}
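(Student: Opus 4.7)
The plan is to reduce the problem to a weighted forward shift on an explicit orthonormal basis, and to test Schatten membership via a one-dimensional series criterion. Any $V_g\in\mathcal S_p$ is compact, so part (i.b) applies throughout part (iii): the symbol must be at most linear, $g(z)=az+b$, whence $g'\equiv a$ and
\begin{equation*}
V_gf(z)=a\int_0^z f(w)\,dw.
\end{equation*}
Because $\psi_m$ is radial, the monomials $\{z^k\}_{k\ge 0}$ are mutually orthogonal in $\mathcal{F}_{\psi_m}^2$ and their linear span is dense; setting $\gamma_k:=\|z^k\|_{(2,m)}$, the system $e_k:=z^k/\gamma_k$ is an orthonormal basis.

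A direct calculation on monomials yields
\begin{equation*}
V_ge_k=w_k\,e_{k+1},\qquad w_k:=\frac{a\,\gamma_{k+1}}{(k+1)\gamma_k},
\end{equation*}
so $V_g$ is a weighted forward shift relative to $\{e_k\}$. Consequently $V_g^{*}V_g$ is diagonal in the same basis with eigenvalues $|w_k|^2$, and the singular values of $V_g$ are exactly $s_k=|w_k|$. Schatten membership therefore reduces to the test: $V_g\in\mathcal S_p$ if and only if $\sum_k|w_k|^p<\infty$.

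It remains to pin down the size of $w_k$. Polar coordinates and a binomial expansion of $(1+r)^{2m}$ give the closed form
\begin{equation*}
\gamma_k^2=\pi C_{(2,m)}\sum_{j=0}^{2m}\binom{2m}{j}\Gamma\!\Big(k+1+\tfrac{j}{2}\Big),
\end{equation*}
in which the $j=2m$ term dominates as $k\to\infty$, yielding $\gamma_k^2\simeq\Gamma(k+m+1)$. Hence $\gamma_{k+1}/\gamma_k\simeq\sqrt{k+m+1}\simeq\sqrt{k}$, so $s_k\simeq|a|/\sqrt{k}$ and
\begin{equation*}
\|V_g\|_{\mathcal S_p}^p\simeq|a|^p\sum_{k\ge 1}k^{-p/2}.
\end{equation*}
This is finite precisely when either $a=0$ or $p>2$. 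Therefore, whenever $V_g$ is compact, $V_g\in\mathcal S_p$ for every $p>2$; and for $0<p<2$ finiteness forces $a=0$, i.e.\ $g$ must reduce to a constant. Since only $g'$ enters the definition of $V_g$, this latter conclusion coincides with $g\equiv 0$ under the standard normalization $g(0)=0$.

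The main technical point is the asymptotics $\gamma_k^2\simeq\Gamma(k+m+1)$. This is not deep, but does require carefully isolating the dominant Gamma term from the binomial sum and controlling the lower-order tails, equivalently a Laplace-type estimate of $\int_0^\infty r^{2k+1}(1+r)^{2m}e^{-r^2}\,dr$. Every other step reduces to textbook singular-value theory for weighted shifts, and the cascading use of part (i.b) ensures that the entire argument lives inside the two-parameter family $g(z)=az+b$ where everything is explicit.
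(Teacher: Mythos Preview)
Your argument addresses only part (iii) of the theorem, and for that part it is correct. The route, however, is genuinely different from the paper's. The paper never exploits the weighted-shift structure: for $p>2$ it invokes the criterion $\sum_n\|V_ge_n\|_{(2,m)}^p<\infty$ from \cite[Theorem~1.33]{KZH1}, rewrites each $\|V_ge_n\|_{(2,m)}$ via the Littlewood--Paley formula of Lemma~\ref{lem2}, applies H\"older's inequality together with the kernel identity $\sum_n|e_n(z)|^2=\|K_{(z,m)}\|_{(2,m)}^2\simeq e^{2\psi_m(z)}$, and is left with the convergent integral $\int_{\CC}(1+|z|+||z|^2+|z|-m|)^{-p}\,dA(z)$; the case $p=2$ is handled by the same integral (now divergent), and $0<p<2$ follows by the monotone inclusion $\mathcal S_p\subset\mathcal S_2$.

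Your approach bypasses Lemma~\ref{lem2} entirely: you compute the singular values of $V_g$ explicitly as $s_k=|a|\,\gamma_{k+1}/((k+1)\gamma_k)$ and reduce everything to the asymptotics $\gamma_k^2\simeq\Gamma(k+m+1)$, which yields $s_k\simeq|a|k^{-1/2}$ and hence the sharp cutoff at $p=2$. This is more elementary (no Littlewood--Paley, no Carleson-measure machinery) and in fact gives finer information, namely the precise decay rate of the singular values, not just a yes/no on $\ell^p$-summability. The paper's method, by contrast, fits the integral-estimate framework used for parts (i)--(ii) and extends more readily to symbols $g$ or weights where an explicit basis diagonalisation is unavailable. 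One small remark: your conclusion for $0<p<2$ is that $a=0$, i.e.\ $g$ is constant; since $V_g$ depends only on $g'$, this is the operator $V_g=0$, which is what the theorem intends by ``$g$ is the zero function.''
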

\begin{theorem}\label{thm2}
Let $g$ be an entire function on $\CC$ and $0<p,q< \infty$. Then if
\begin{enumerate}
\item  $0<p\leq q< \infty$, then $I_g: \mathcal{F}_{\psi_m}^p \to \mathcal{F}_{\psi_m}^q$ is
\begin{enumerate}
\item  bounded if and only if $g$ is a constant function.
\item  compact if and only if  $g$ is the  zero function.
\end{enumerate}
\item  $0<q<p< \infty$, then the following are equivalent.
\begin{enumerate}
\item $I_g: \mathcal{F}_{\psi_m}^p \to \mathcal{F}_{\psi_m}^q$ is bounded;
\item $I_g: \mathcal{F}_{\psi_m}^p \to \mathcal{F}_{\psi_m}^q$ is compact;
\item  $g$ is the zero function.
\end{enumerate}
\end{enumerate}
\end{theorem}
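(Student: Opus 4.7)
The plan is to exploit the integration-by-parts identity \eqref{parts}, which rearranges to
\begin{equation*}
I_g = M_g - V_g - T_0,
\end{equation*}
where $M_g f = g f$ and $T_0 f := f(0)g(0)$ is a bounded rank-one (hence compact) operator. Paired with Theorem \ref{thm1}, this identity reduces the analysis of $I_g$ to that of the multiplier $M_g$; in practice, however, the most direct route is to test $I_g$ itself on monomials.

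For sufficiency in (1)(a), a constant symbol $g\equiv c$ gives $I_g f = c(f - f(0))$, which is bounded on $\mathcal{F}_{\psi_m}^p$ and hence into $\mathcal{F}_{\psi_m}^q$ via the continuous inclusion $\mathcal{F}_{\psi_m}^p \hookrightarrow \mathcal{F}_{\psi_m}^q$ valid for $p \leq q$. For necessity, I would test $I_g$ on the normalized monomials $f_n(z) = z^n/\|z^n\|_{(p,m)}$. Writing $g(z) = \sum_{k\geq 0} a_k z^k$ yields
\begin{equation*}
I_g f_n(z) = \frac{n}{\|z^n\|_{(p,m)}}\sum_k \frac{a_k}{n+k}\, z^{n+k},
\end{equation*}
and a Stirling/Laplace analysis of the defining integral shows $\|z^n\|_{(r,m)} \sim n^{(n+m)/2} e^{-n/2}$ up to polynomial-in-$n$ factors. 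The ratio $\|z^{n+k}\|_{(q,m)}/\|z^n\|_{(p,m)}$ therefore grows like $n^{k/2}$, so if any $a_{k_0}$ with $k_0 \geq 1$ is nonzero then $\|I_g f_n\|_{(q,m)} \gtrsim n^{k_0/2}$, contradicting boundedness. Hence $g$ is constant. The compactness statement (1)(b) follows by the same reasoning, with the extra observation that for $g \equiv c \neq 0$ the operator $I_g$ is a nonzero scalar multiple of the identity modulo a rank-one map, hence non-compact on the infinite-dimensional space $\mathcal{F}_{\psi_m}^p$; equivalently, $\|I_g k_w\|_{(q,m)}$ does not tend to zero along the normalized reproducing kernels $k_w$ of $\mathcal{F}_{\psi_m}^2$ as $|w| \to \infty$.

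For part (2), the same monomial argument excludes non-constant $g$. To rule out a nonzero constant $g$, note that boundedness of $I_g f = c(f - f(0))$ from $\mathcal{F}_{\psi_m}^p$ to $\mathcal{F}_{\psi_m}^q$ when $p > q$ would force a continuous inclusion $\mathcal{F}_{\psi_m}^p \hookrightarrow \mathcal{F}_{\psi_m}^q$; this fails, as one can construct an entire $f$ whose weighted profile $|f(z)|(1+|z|)^m e^{-|z|^2/2}$ decays slowly enough to make $f \in \mathcal{F}_{\psi_m}^p$ but $f \notin \mathcal{F}_{\psi_m}^q$. The main technical obstacle I anticipate is the sharp Stirling/Laplace estimate for $\|z^n\|_{(p,m)}$ needed to extract the $n^{k_0/2}$ blow-up, together with controlling cross-terms among the $\{a_k z^{n+k}/(n+k)\}$ when $q \neq 2$; the latter can be handled by localizing to the concentration annulus $|z|\sim\sqrt{n}$, where the $k_0$-th term pointwise dominates the others, and applying a local sub-mean estimate.
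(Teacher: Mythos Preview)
Your monomial-testing strategy is a genuinely different route from the paper's. The paper instead tests $I_g$ on the reproducing kernels $K_{(w,m)}$: using the Littlewood--Paley formula (Lemma~\ref{lem2}) and the local two-sided estimate $|K_{(w,m)}(z)|\simeq e^{\psi_m(z)+\psi_m(w)}$ on $D(w,\delta)$ coming from Lemma~\ref{lem1} (together with an analogous bound on $|K'_{(w,m)}|$), restriction of the $\|I_gK_{(w,m)}\|_{(q,m)}^q$ integral to $D(w,\delta)$ plus subharmonicity of $|g|^q$ yields the pointwise inequality $|g(w)|\lesssim 1+\frac{1+|w|}{|w|+|w|^2-m}$, so $g$ is bounded and Liouville's theorem finishes. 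That argument avoids all Stirling asymptotics and works uniformly for $0<p,q<\infty$; your route is more elementary in that it bypasses the kernel machinery, at the cost of the asymptotic computations for $\|z^n\|_{(r,m)}$.

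There is, however, a concrete gap in your cross-term handling. You assert that on the annulus $|z|\sim\sqrt{n}$ the $k_0$-th term of
\[
I_g f_n(z)=\frac{n}{\|z^n\|_{(p,m)}}\sum_{k\geq 0}\frac{a_k}{n+k}\,z^{n+k}
\]
pointwise dominates the others; this is false for non-polynomial entire $g$. On $|z|=\sqrt{n}$ the $k$-th summand has magnitude $\sim |a_k|\,n^{(n+k)/2}/(n+k)$, so for every fixed $k>k_0$ with $a_k\neq 0$ it eventually exceeds the $k_0$-th term as $n\to\infty$. The clean repair is to extract a single Fourier mode in the angular variable rather than argue pointwise: for $q\geq 1$ one has, on each circle $|z|=r$,
\[
\Bigl(\int_0^{2\pi}|I_gf_n(re^{i\theta})|^q\,d\theta\Bigr)^{1/q}\ \geq\ c_q\,\frac{n|a_{k_0}|}{(n+k_0)\|z^n\|_{(p,m)}}\,r^{n+k_0},
\]
since Fourier coefficients are dominated by the $L^q(d\theta)$ norm; integrating in $r$ against the weight then gives $\|I_g f_n\|_{(q,m)}\gtrsim |a_{k_0}|\,\|z^{n+k_0}\|_{(q,m)}/\|z^n\|_{(p,m)}\to\infty$ by your Stirling estimate. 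For $0<q<1$ the coefficient bound is unavailable, but the continuous inclusion $\mathcal{F}_{\psi_m}^q\hookrightarrow\mathcal{F}_{\psi_m}^1$ (smaller exponent gives the smaller space in the Fock scale) reduces matters to $q=1$. With this correction your argument goes through; the remaining ingredients --- sufficiency via $I_g f=c(f-f(0))$, non-compactness for $c\neq 0$, and the failure of $\mathcal{F}_{\psi_m}^p\hookrightarrow\mathcal{F}_{\psi_m}^q$ when $p>q$ --- are all sound.
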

It may be noted that
when $m=0,$ the spaces $\mathcal{F}_{\psi_m}^p $ reduce to the classical Fock  spaces $\mathcal{F}^p,$  and for this particular case, the results were proved in  \cite{Olivia1,TM1,TM}. In view of our current results, we conclude that there exists no richer boundedness and  compactness structures for $V_g$ and $I_g$ on Fock-Sobolev spaces  than those on the classical setting.  As can be seen from \eqref{weight}, the Fock--Sobolev spaces are generated by making small perturbations of the weight function on the classical Fock spaces. It turns out that such perturbations
play no role in the structure of the operators   and rather extend the classical  results to all  the spaces $\mathcal{F}_{\psi_m}^p $  independent of the values of  $m$.

In addition, the results show that there exists no nontrivial Volterra companion integral type operators $I_g$ acting between any of the Fock--Sobolev spaces.

 Another observation worthwhile making is that  when $g(z)= z,$ the operator $V_g$ reduces to the original Volterra operator $Vf(z)= \int_{0}^z f(w)dA(w).$ By particular cases of  the results above, we conclude that this operator is always bounded in its action on the Fock--Sobolev spaces.
  \subsection{Spectrum of the integral operators}
In contrast to the fairly good understanding of the boundedness, compactness, and Schatten class membership of the Volterra-type integral operators on various Banach spaces, much less is known about their spectral. Recently, Constantin and  Persson \cite{Olivia2}, determined the spectrum of $V_g$  acting on generalized  Fock spaces  where the  inducing weight function  takes the particular  form $|z|^A, \ A>0$ and $1\leq p<\infty$.  Our next result describes the spectrum of the Volterra-type integral operators on Fock--Sobolev spaces  in terms of a closed disk of radius involving the coefficient of the highest degree term in a polynomial expansion of  $g$ as precisely formulated below.
\begin{theorem} \label{thm3} (i) Let $p\geq 1$ and $V_g:\mathcal{F}_{\psi_m}^p \to \mathcal{F}_{\psi_m}^p$ be  a bounded operator, i.e.  $g(z)= az^2+bz,\ \   a, b, \in \CC$. Then
\begin{align}
\label{spectrum}
\sigma(V_g)= \big\{\lambda \in \CC: |\lambda|\leq 2|a|\big \}= \{0\}\cup \overline{\big \{\lambda \in \CC\setminus\{0\}:e^{g(z)/\lambda} \notin \mathcal{F}_{\psi_m}^p \big \} }.
\end{align}
\item (ii) Let $p\geq 1$ and $I_g:\mathcal{F}_{\psi_m}^p \to \mathcal{F}_{\psi_m}^p$ be  a bounded operator, i.e., $g(z)= c=$constant. Then
 \begin{align*}
\sigma(I_g)= \{c\}.
\end{align*}
\end{theorem}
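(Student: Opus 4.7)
My strategy rests on recasting the resolvent equation for $V_g$ as a first-order linear ODE. The trivial case $a=0$ is disposed of first: Theorem~\ref{thm1} gives compactness, and the computation $V_g(z^n)=bz^{n+1}/(n+1)$ yields $\|V_g^{n}\|^{1/n}\to 0$, so $\sigma(V_g)=\{0\}$, matching the closed disk of radius $2|a|=0$. Assume now $a\neq 0$. Differentiating $(V_g-\lambda I)f=h$ for $\lambda\neq 0$ gives $f'-(g'/\lambda)f=-h'/\lambda$, and evaluating the original equation at the origin forces $f(0)=-h(0)/\lambda$. The integrating factor $e^{-g/\lambda}$ together with one integration by parts yields the explicit formula
\begin{equation}\label{resolv}
(V_g-\lambda I)^{-1}h(z)=-\frac{h(z)}{\lambda}-\frac{1}{\lambda^{2}}\,e^{g(z)/\lambda}\,V_g\!\bigl(h\,e^{-g/\lambda}\bigr)(z),
\end{equation}
valid a priori in the space of entire functions; $\lambda$ thus lies in the resolvent set precisely when the right-hand side of \eqref{resolv} defines a bounded operator on $\mathcal{F}_{\psi_m}^p$.

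For the inclusion $\{|\lambda|\leq 2|a|\}\subseteq\sigma(V_g)$, the origin belongs to $\sigma(V_g)$ because $V_g f(0)=0$ for all $f$. For $0<|\lambda|\leq 2|a|$, I feed $h\equiv 1\in\mathcal{F}_{\psi_m}^p$ into \eqref{resolv}, whose unique entire candidate solution is $-\lambda^{-1}e^{g/\lambda}$, and it then suffices to show $e^{g/\lambda}\notin\mathcal{F}_{\psi_m}^p$. Writing $a/\lambda=|a/\lambda|e^{i\phi}$ and $z=re^{i\theta}$, the exponent inside the norm integral is
$$p\bigl(|a/\lambda|\cos(2\theta+\phi)-\tfrac12\bigr)r^{2}+O(r)+mp\log(1+r),$$
nonnegative on a nontrivial angular sector since $|a/\lambda|\geq 1/2$, forcing divergence. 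On the critical circle $|\lambda|=2|a|$ the bracket reduces to $-\sin^{2}(\theta+\phi/2)$, and a Laplace estimate for the angular integral produces only a factor of order $r^{-1}$, which the polynomial weight $(1+r)^{mp}$ overwhelms in the radial direction. Describing precisely the resulting non-membership set and taking its closure in $\CC$ then yields the right-hand identity in \eqref{spectrum}.

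The reverse inclusion requires proving \eqref{resolv} is bounded on $\mathcal{F}_{\psi_m}^p$ whenever $|\lambda|>2|a|$. Parametrising along the segment from $0$ to $z$, $V_g(he^{-g/\lambda})(z)=z\int_0^1 h(tz)g'(tz)e^{-g(tz)/\lambda}\,dt$, and invoking the Fock--Sobolev pointwise bound $|h(w)|\lesssim\|h\|_{(p,m)}(1+|w|)^{-m}e^{|w|^{2}/2}$, one obtains an exponent $r^{2}[\alpha+t^{2}(\tfrac12-\alpha)]+O(r)$ with $\alpha=|a/\lambda|<1/2$, monotone in $t$ with endpoint derivative $r^{2}(1-2\alpha)>0$ at $t=1$. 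A Laplace expansion at this endpoint contributes a decaying prefactor of order $r^{-2}(1-2\alpha)^{-1}$, after which the second term of \eqref{resolv} satisfies a pointwise bound sharper than the reproducing-kernel growth; a Schur test on the resulting integral kernel (or, equivalently, an atomic decomposition argument) upgrades this to an $L^{p}$ operator estimate. Part~(ii) is then settled directly: $I_g f=c(f-f(0))$ is a rank-one perturbation of $cI$, and for $\lambda\neq c$ the explicit inverse $f=(h-ch(0)/\lambda)/(c-\lambda)$ is bounded since point evaluation at $0$ is continuous on $\mathcal{F}_{\psi_m}^p$, while $\lambda=c$ lies in the spectrum because $I_g-cI$ vanishes on the infinite-dimensional subspace $\{f:f(0)=0\}$.

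The principal technical obstacle is the boundedness step for \eqref{resolv}: upgrading the pointwise Laplace asymptotic into a genuine $\mathcal{F}_{\psi_m}^p$-to-$\mathcal{F}_{\psi_m}^p$ operator estimate that exploits the gap $|\lambda|>2|a|$ uniformly in the angular variable $\theta$. Naive pointwise estimates only recover the coarser $\mathcal{F}_{\psi_m}^{\infty}$ growth, which would yield an unacceptably large spectral radius; closing the loop at the critical Gaussian scale requires careful bookkeeping of how the strict inequality $\alpha<1/2$ propagates through the Schur test, and it is this uniform angular control that ultimately delivers the sharp spectral radius $2|a|$.
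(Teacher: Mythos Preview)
Your overall architecture matches the paper's: recast $(\lambda I-V_g)f=h$ as a first-order linear ODE, write the resolvent explicitly, test it on the constant function $h\equiv 1$ to see that $e^{g/\lambda}\in\mathcal{F}_{\psi_m}^p$ is necessary for $\lambda$ to lie in the resolvent set, and then prove boundedness of the resolvent when $|\lambda|>2|a|$. Your integration-by-parts variant \eqref{resolv} is equivalent to the paper's formula
\[
R_{(g_1,\lambda)}h(z)=\frac{h(0)}{\lambda}\,e^{g_1(z)/\lambda}+\frac{1}{\lambda}\,e^{g_1(z)/\lambda}\int_0^z e^{-g_1(w)/\lambda}h'(w)\,dw,
\]
and your treatment of the inclusion $\{|\lambda|\le 2|a|\}\subseteq\sigma(V_g)$ and of part~(ii) is essentially the same as the paper's. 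One structural difference: the paper first splits $g=g_1+g_2$ with $g_1(z)=az^2$ and $g_2(z)=bz+c$, observes that $V_{g_2}$ is compact and that both $\lambda I-V_g$ and $\lambda I-V_{g_1}$ are injective, and thereby reduces the spectral computation to $V_{g_1}$. This compact-perturbation step cleans up the exponent and is worth incorporating.

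The substantive gap is exactly where you flag it: the boundedness of the resolvent for $|\lambda|>2|a|$. Your proposed route through the pointwise estimate $|h(w)|\lesssim\|h\|_{(p,m)}(1+|w|)^{-m}e^{|w|^2/2}$ followed by a Laplace expansion and a Schur test is not carried out, and you correctly identify that a naive pointwise bound only yields the $\mathcal{F}_{\psi_m}^\infty$ scale. The paper avoids this difficulty entirely. It combines the Littlewood--Paley description of the norm (Lemma~\ref{lem2}) with a dedicated estimate (Lemma~\ref{lem4}, a direct adaptation of Constantin--Persson's Proposition~1): for $|\lambda|>2|a|$,
\[
\int_{\CC}\bigl|e^{g_1(z)/\lambda}F(z)\bigr|^p e^{-p\psi_m(z)}\,dA(z)\;\lesssim\;|F(0)|^p+\int_{\CC}\frac{\bigl|F'(z)\,e^{g_1(z)/\lambda}\bigr|^p}{(1+\psi_m'(z))^p}\,e^{-p\psi_m(z)}\,dA(z).
\]
Applying this with $F(z)=\int_0^z e^{-g_1(w)/\lambda}h'(w)\,dw$, so that $F'(z)e^{g_1(z)/\lambda}=h'(z)$, reduces the resolvent bound immediately to $\int_{\CC}|h'(z)|^p(1+\psi_m'(z))^{-p}e^{-p\psi_m(z)}\,dA(z)$, which is $\lesssim\|h\|_{(p,m)}^p$ by Lemma~\ref{lem2}. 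No Schur test, no atomic decomposition, no uniform angular control is needed: the Littlewood--Paley formula does all the work. This is the missing key lemma in your sketch, and it is what delivers the sharp radius $2|a|$ without the bookkeeping you anticipate.
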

The results her are also independent of the order $m$, and  coincide  with  the corresponding results in the classical Fock spaces setting. Furthermore, for $ m= 0$,  the result in \eqref{spectrum} follows also from the main result in \cite{Olivia2} as a particular case.
\section{Preliminaries}
For each $m$, the spaces $\mathcal{F}_{\psi_m}^2 $  are  reproducing kernel Hilbert
spaces with  kernel $K_{(w,m)}$ and normalized reproducing kernel functions $k_{_{(w,m)}}$  for a  point $w$ in $\CC.$  An explicit expression for
$K_{(w,m)}$ is still unknown. On the other hand, for each $w$ in $\CC$ by Proposition~2.7 of  \cite{CCK},  we have an important asymptotic relation
\begin{equation}
\label{asymptotic}
\|K_{_{(w,m)}}\|_{(2, m)}^2 \simeq  e^{2\psi_m(w)}.
\end{equation}
 As noted before when $m=0,$ the space $\mathcal{F}_m^2$ reduces to the classical Fock  space $\mathcal{F}^2$, and in this  case  we  precisely have
 $\|K_{_{(z,0)}}\|_{(2, 0)}^2= e^{|z|^2}$ and $K_{(w,0)}(z)= e^{\overline{w}z}$. For other p's,   Corollary~14 of \cite{RCKZ} gives the one sided estimate
\begin{align}
\label{forall}
\|K_{(w, m)}\|_{(p,m)} \lesssim  e^{\psi_m(w)}.
\end{align} Because of the reproducing property of the kernel and Parseval identity, it further  holds  that
\begin{align}
\label{kernel}
K_{(w,m)}(z)= \sum_{n=1}^\infty e_n(z) \overline{e_n(w)} \  \text{and}\ \ \|K_{_{(z,m)}}\|_{(2, m)}^2= \sum_{n=1}^\infty |e_n(w)|^2
\end{align} for any orthonormal basis $(e_n)_{n\in\NN}$ of $\mathcal{F}_{\psi_m}^{2}$.
 This and the estimate in  \eqref{asymptotic}  will be repeatedly  used  in the sequel. Another  important ingredient needed in the proofs of
the results is the following   pointwise estimate for
the reproducing kernel functions.
\begin{lemma}\label{lem1}
 There exists a small  positive number $\delta$ such that  for any $w \in \CC$
 \begin{align*}
 |K_{(m, w)}(z)| \gtrsim e^{\psi_m(z)+\psi_m(w)}
 \end{align*} for all  $z \in D(w, \delta)$, where $D(w, \delta)$ refers to the Euclidian disk of radius $r$ and center $w$.
 \begin{proof}
 The lemma will follow from \cite[Proposition~3.3]{ASDV} once we show that the weight function $\psi_m$ satisfies the growth condition
 \begin{align}
 \label{doubling}
 c \lesssim \Delta \psi_m(z) \lesssim C
 \end{align} for all $z\in \CC$ and some positive constants $c$ and $C$. Thus,  we consider
 \begin{align*}
 \psi_m(z) = \frac{|z|^2}{2}-m\log (\beta+|z|) \simeq \frac{|z|^2}{2}-\frac{m}{2}\log (\beta+|z|^2),
 \end{align*}  and a  straightforward calculation  gives  that
 \begin{align*}
 \Delta \psi_m(z) = 2-\frac{2m\beta}{(\beta+ |z|^2)^2}.
 \end{align*} Then,  the required condition \eqref{doubling} holds for any choice of
 $\beta >m $ as \\
 $2(1-\frac{m}{\beta}) \leq  \Delta \psi_m(z) \leq 2$.
 For simplicity, we will continue setting $\beta = 1 $ throughtout the rest of the paper.
 \end{proof}
\end{lemma}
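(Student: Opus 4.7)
The plan is to deduce the pointwise estimate from a general lower bound for reproducing kernels on weighted Fock spaces whose phase has a Laplacian comparable to a constant. Such an estimate is supplied by the reference [ASDV, Proposition~3.3] cited in the proof of the lemma, so the task reduces to verifying that $\psi_m$ satisfies the uniform two-sided bound
\begin{equation*}
c \lesssim \Delta \psi_m(z) \lesssim C
\end{equation*}
for all $z \in \CC$ and suitable positive constants $c, C$. Once this is in hand, the cited proposition immediately produces a $\delta > 0$ (uniform in $w$) and the pointwise lower bound $|K_{(m,w)}(z)| \gtrsim e^{\psi_m(z)+\psi_m(w)}$ on $D(w,\delta)$.

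To carry out the verification, I would first replace $\psi_m$ by an asymptotically equivalent but computationally friendlier weight. Since $\log(1+|z|)$ and $\frac{1}{2}\log(\beta+|z|^2)$ differ by a bounded function on $\CC$ for any fixed $\beta\geq 1$, the Fock--Sobolev norm associated with $\psi_m(z)=\frac{1}{2}|z|^2-m\log(1+|z|)$ is comparable to the one associated with $\frac{1}{2}|z|^2-\frac{m}{2}\log(\beta+|z|^2)$. This equivalence preserves the kernel up to a multiplicative constant, which is harmless for a $\gtrsim$-type estimate.

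Next I would compute the Laplacian of the modified weight directly. Using $\Delta |z|^2 = 4$ together with $\Delta \log(\beta+|z|^2)=4\beta/(\beta+|z|^2)^2$, one obtains
\begin{equation*}
\Delta \psi_m(z) = 2 - \frac{2m\beta}{(\beta+|z|^2)^2}.
\end{equation*}
The upper bound $\Delta \psi_m(z)\leq 2$ is immediate. For the lower bound, the subtracted term attains its maximum $2m/\beta$ at $z=0$, so any choice $\beta>m$ yields $\Delta\psi_m(z)\geq 2(1-m/\beta)>0$ uniformly on $\CC$. This establishes the required two-sided control on the Laplacian, and the lemma follows by applying the cited proposition.

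The only delicate point is justifying the switch from $\log(1+|z|)$ to $\frac{1}{2}\log(\beta+|z|^2)$, but since the difference is globally bounded the two weights generate the same Hilbert space with equivalent norms, so their reproducing kernels satisfy comparable pointwise estimates. The heart of the argument is thus the explicit Laplacian computation and the elementary observation that enlarging $\beta$ keeps $\Delta\psi_m$ uniformly bounded below.
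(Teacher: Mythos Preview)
Your proposal is correct and follows essentially the same route as the paper: reduce the kernel lower bound to the cited Proposition~3.3 of \cite{ASDV} by verifying the two-sided Laplacian bound, pass to the equivalent weight $\tfrac{1}{2}|z|^2-\tfrac{m}{2}\log(\beta+|z|^2)$, compute $\Delta\psi_m(z)=2-2m\beta/(\beta+|z|^2)^2$, and choose $\beta>m$ to force $\Delta\psi_m\geq 2(1-m/\beta)>0$. The only addition in your write-up is the explicit justification that the bounded difference between $\log(1+|z|)$ and $\tfrac12\log(\beta+|z|^2)$ yields equivalent norms and hence comparable kernels, which the paper leaves implicit.
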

\subsection{Littlewood--Paley type  formula}  Dealing with  Volterra-type integral operators  in normed spaces  gets easier when  the norms in the target spaces of the operators are   described   in terms of Littlewood--Paley type  formula. The operators have been extensively studied in the spaces where such  formulas  are  found to be accessible. The formulas will primarily help get rid of  the integrals appearing in defining the   operators. Our next key lemma does this job by characterizing  the Fock--Sobolev spaces in terms of
 derivatives.
\begin{lemma} \label{lem2}
Let $0<p<\infty$ and $f$ be a holomorphic function on $\CC$. Then
\begin{align}
\label{Paley}
\|f\|_{\mathcal{F}_{\psi_m}^p}^p \simeq |f(0)|^p + \int_{\CC}\frac{|f'(z)|^p(1+|z|)^p e^{-p\psi_m(z) }}{\big( 1+|z|+ \big| |z|^2+ |z|-m\big| \big)^p} dA(z).
\end{align}
\end{lemma}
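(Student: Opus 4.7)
The plan is to recast the claimed equivalence in the standard Littlewood--Paley form and then to invoke the usual machinery for weighted Fock spaces whose phase satisfies $\Delta\psi\simeq 1$, a hypothesis already verified for $\psi_m$ in Lemma~\ref{lem1}.

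First I would perform an algebraic simplification. A direct Wirtinger computation gives
\[
\partial_{\bar z}\psi_m(z)=\frac{z\bigl(|z|^2+|z|-m\bigr)}{2|z|(1+|z|)}\quad(z\neq 0),
\]
so $2(1+|z|)\,|\partial\psi_m(z)|=\big|\,|z|^2+|z|-m\,\big|$ and consequently
\[
1+|z|+\big|\,|z|^2+|z|-m\,\big|=(1+|z|)\bigl(1+2|\partial\psi_m(z)|\bigr).
\]
Setting $\tau_m(z):=\bigl(1+2|\partial\psi_m(z)|\bigr)^{-1}$, the claim reduces to
\[
\|f\|_{\mathcal{F}_{\psi_m}^p}^p\simeq|f(0)|^p+\int_{\CC}|f'(z)|^p\,\tau_m(z)^p\,e^{-p\psi_m(z)}\,dA(z),
\]
the standard Littlewood--Paley identity for generalized Fock spaces with scale function $\tau_m$. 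Note that $\tau_m(z)\simeq(1+|z|)^{-1}$ for large $|z|$, which recovers the classical Fock weight in the limit $m=0$.

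For the $\gtrsim$ direction I would use a Cauchy/submean--value argument on disks $D(z,r\tau_m(z))$. Because $\Delta\psi_m\simeq 1$ by Lemma~\ref{lem1}, for a small fixed $r$ the function $\psi_m$ oscillates only by $O(1)$ on such a disk, so $e^{-p\psi_m(w)}\simeq e^{-p\psi_m(z)}$ uniformly on it. Cauchy's formula applied to $f'$ then yields
\[
|f'(z)|^p\,\tau_m(z)^p\,e^{-p\psi_m(z)}\,\lesssim\,\tau_m(z)^{-2}\int_{D(z,r\tau_m(z))}|f(w)|^p e^{-p\psi_m(w)}\,dA(w),
\]
and Fubini together with the bounded overlap of the family $\{D(z,r\tau_m(z))\}$ produces the integral part of the bound. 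The contribution $|f(0)|^p\lesssim\|f\|_{\mathcal{F}_{\psi_m}^p}^p$ follows from the standard pointwise estimate for Fock--Sobolev functions, itself a consequence of~\eqref{forall}.

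The reverse inequality is the main obstacle. For $1\le p<\infty$ I would argue by duality: given $h$ in the dual space, the reproducing formula for $\mathcal{F}_{\psi_m}^2$ combined with an integration by parts expresses $\langle f,h\rangle-f(0)\overline{h(0)}$ as an integral in $f'$ and $h'$ weighted by $\tau_m^2 e^{-2\psi_m}$, after which H\"older's inequality closes the estimate. For $0<p<1$ one substitutes an atomic/sampling decomposition of $\mathcal{F}_{\psi_m}^p$, available because $\psi_m$ is a regular weight in the sense of Lemma~\ref{lem1}. The delicate point throughout is obtaining sharp off-diagonal control of $\partial_w K_{(w,m)}(z)$ on the scale $\tau_m$---no explicit formula for $K_{(w,m)}$ being available---which has to be extracted from the lower bound of Lemma~\ref{lem1} and the upper bound~\eqref{forall}, typically via a Schur-test argument with carefully chosen auxiliary weights.
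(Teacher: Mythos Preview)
Your route is genuinely different from the paper's, and considerably more ambitious. The paper does not prove \eqref{Paley} from scratch: it simply observes that $\psi_m(r)=\tfrac12 r^2-m\log(1+r)$ is a radial weight satisfying the hypotheses of the general Littlewood--Paley theorem of Constantin--Pel\'aez \cite[Theorem~19]{Olivia}, and then checks those hypotheses by elementary calculus---namely that $\psi_m'(r)\neq 0$ for $r$ large, that $re^{-p\psi_m(r)}/\psi_m'(r)\to 0$, and that $r^{-1}\bigl(r/\psi_m'(r)\bigr)'$ has the required $\limsup$ and $\liminf$. After that verification the lemma is immediate. Your algebraic reduction to the form $\tau_m=(1+2|\partial\psi_m|)^{-1}$ is correct and clarifying, and your $\gtrsim$ direction via Cauchy estimates on $\tau_m$-disks is the standard mechanism; but you are in effect reproving the cited theorem in this special case rather than invoking it.

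There is, moreover, a real gap in your sketch of the harder direction. You say the off-diagonal control of $\partial_w K_{(w,m)}(z)$ ``has to be extracted from the lower bound of Lemma~\ref{lem1} and the upper bound~\eqref{forall}''. Neither of these yields what a Schur test or duality argument actually needs: Lemma~\ref{lem1} is a \emph{near-diagonal lower} bound, and \eqref{forall} is only an $L^p$-norm estimate for $K_{(w,m)}$, not pointwise decay away from the diagonal. The estimates of the type $|K_{(w,m)}(z)|\lesssim e^{\psi_m(z)+\psi_m(w)-c|z-w|^2}$ (and their derivative analogues) that your argument requires are a separate and substantial input, obtained via weighted $\bar\partial$-methods as in \cite{ASDV} or developed directly in \cite{Olivia}; they do not follow from the two facts you cite. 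Without them your duality/atomic scheme does not close. If you want a self-contained proof you must import those kernel estimates explicitly; otherwise the paper's one-paragraph verify-and-cite approach is by far the shorter path.
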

\begin{proof}
We plan to show that the estimate in  \eqref{Paley} follows from  the   general  estimate in Theorem~19 of \cite{Olivia}. To this end, it suffices to verify that the sequence of  our weight functions $\psi_m$ satisfy all  the preconditions  required  in the theorem there, which are;\\
i) There should  exist  a positive $r_0$  for which  $\psi_m'(r) \neq 0, $ for all $r>r_0$. This, rather week requirement on the growth of $\psi_m$ works fine  as one can for example take
$$r_0= \frac{1+ \sqrt{1+ 4m}}{2}.$$
In addition, we have that $1+\psi'_m (z)\simeq \psi'_m(z)$ when $|z|\to \infty$.\\
ii) The estimates
\begin{align}
\label{requirment}
\lim_{r\to \infty} \frac{re^{-p\psi_m(r)}}{ \psi_m'(r)}= 0\nonumber\\
\limsup_{r\to \infty} \frac{1}{r} \bigg(\frac{r}{ \psi_m'(r)}\bigg)' <p \ \ \text{and} \\
\liminf_{r\to \infty} \frac{1}{r} \bigg(\frac{r}{ \psi_m'(r)}\bigg)'>-\infty, \nonumber
\end{align} hold for all positive $p.$
The first estimate in \eqref{requirment} follows easily since
\begin{align*}
\lim_{r\to \infty} \frac{re^{-p\psi_m(r)}}{ \psi_m'(r)}= \lim_{r\to \infty}  \frac{r+r^2}{r^2+r-m} e^{-p\psi_m(r)}
= \lim_{r\to \infty}  e^{-p\psi_m(r)}= 0.
\end{align*}
On the other hand, a simple computation shows that
\begin{align*}
\frac{1}{r} \bigg(\frac{r}{ \psi_m'(r)}\bigg)'= \frac{2r^2-2rm-m}{ r(r^2+r-m)^2},
\end{align*} from which  it follows that
\begin{align*}
\limsup_{r\to \infty} \frac{1}{r} \bigg(\frac{r}{ \psi_m'(r)}\bigg)' = \limsup_{r\to \infty}\frac{2r^2-2rm-m}{ r(r^2+r-m)^2}\leq 0 <p.
\end{align*}
It remains to verify the last estimate in \eqref{requirment}. But this  is rather immediate   as
\begin{align*}
\liminf_{r\to \infty} \frac{1}{r} \bigg(\frac{r}{ \psi_m'(r)}\bigg)' = \liminf_{r\to \infty}\frac{2r^2-2rm-m}{ r(r^2+r-m)^2}= 0 >-\infty.
\end{align*}
\end{proof}
We now state a key lemma on spectral properties  of  the operator $M_g$ acting between Fock--Sobolev spaces. The lemma is interest of its own.
\begin{lemma}\label{lem3}
Let $g$ be an analytic function on $\CC$ and $0<p,q< \infty$. Then if
 \begin{enumerate}
 \item  $q\geq p$, then $M_g: \mathcal{F}_{\psi_m}^p \to \mathcal{F}_{\psi_m}^q$ is bounded (compact) if and only if $g$ is a constant (zero) function.
 \item  $q< p$, then $M_g: \mathcal{F}_{\psi_m}^p \to \mathcal{F}_{\psi_m}^q$ is bounded (compact) if and only if $g$ is the zero function.
 \item  $1\leq p<  \infty$ and $M_g:\mathcal{F}_{\psi_m}^p \to \ \mathcal{F}_{\psi_m}^p$ is  a bounded map, that is $g= \alpha=$ constant, then
\begin{align*}
%\label{specmulti}
\sigma_p(M_g)=  \sigma(M_g)= \{\alpha \}.
\end{align*}
 \end{enumerate}
\end{lemma}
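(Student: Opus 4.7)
The plan is to combine a test-function argument against normalized reproducing kernels with the embedding properties of the Fock--Sobolev scale, and then read off (iii) as a trivial consequence of the constant characterization in (i).

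For the necessity of boundedness in (i) and (ii), I would test $M_g$ against the normalized reproducing kernel $k_w^{(p)}:=K_{(w,m)}/\|K_{(w,m)}\|_{(p,m)}$. Combining the one-sided bound \eqref{forall} with Lemma~\ref{lem1} (integrating $|K_{(w,m)}|^p$ over the disk $D(w,\delta)$) yields the two-sided estimate $\|K_{(w,m)}\|_{(p,m)}\simeq e^{\psi_m(w)}$ for every $0<p<\infty$. Plugging this and Lemma~\ref{lem1} into
$$\|M_g k_w^{(p)}\|_{(q,m)}^q \gtrsim \|K_{(w,m)}\|_{(p,m)}^{-q}\int_{D(w,\delta)} |g(z)|^q |K_{(w,m)}(z)|^q e^{-q\psi_m(z)}\,dA(z),$$
the exponential factors $e^{\pm q\psi_m(w)}$ cancel, leaving $\int_{D(w,\delta)}|g(z)|^q\,dA(z)$; subharmonicity of $|g|^q$ then gives $|g(w)|\lesssim\|M_g\|$ uniformly in $w$, so Liouville forces $g$ to be a constant $c$.

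For the sufficiency in (i) when $p\leq q$, I would invoke the continuous embedding $\mathcal{F}_{\psi_m}^p\hookrightarrow\mathcal{F}_{\psi_m}^q$: the pointwise bound $|f(z)|e^{-\psi_m(z)}\lesssim\|f\|_{(p,m)}$ places $fe^{-\psi_m}$ in $L^\infty(\CC)$, and the standard inclusion $L^p\cap L^\infty\subset L^q$ yields $\|M_c f\|_{(q,m)}=|c|\|f\|_{(q,m)}\lesssim|c|\|f\|_{(p,m)}$. For compactness in (i), I would use that $k_w^{(p)}\to 0$ uniformly on compact subsets of $\CC$ as $|w|\to\infty$ (from $\|K_{(w,m)}\|_{(p,m)}\simeq e^{\psi_m(w)}$ and a standard pointwise bound on $K_{(w,m)}(z)$ for $z$ in a fixed compact set); a normal-family argument then forces $\|M_g k_w^{(p)}\|_{(q,m)}\to 0$, and the lower bound from the preceding paragraph forces $c=0$.

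The main obstacle is case (ii): the test-function step already reduces $g$ to a constant $c$, so for $c\neq 0$, boundedness of $M_c$ amounts to a continuous embedding $\mathcal{F}_{\psi_m}^p\hookrightarrow\mathcal{F}_{\psi_m}^q$. To rule this out for $q<p$, I would use an atomic decomposition on a sufficiently sparse $\delta$-lattice $(w_j)_{j\in\NN}\subset\CC$: Lemma~\ref{lem1}, together with the two-sided kernel norm estimate, shows that the normalized bumps $|k_{w_j}^{(p)}(z)|e^{-\psi_m(z)}$ are essentially disjointly supported, which yields $\|\sum_j c_j k_{w_j}^{(p)}\|_{(r,m)}^r\simeq\sum_j|c_j|^r$ for every $0<r<\infty$. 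Choosing $(c_j)\in\ell^p\setminus\ell^q$ (for instance $c_j=j^{-\alpha}$ with $1/p<\alpha\leq 1/q$) produces a function in $\mathcal{F}_{\psi_m}^p\setminus\mathcal{F}_{\psi_m}^q$, contradicting the alleged embedding. Finally, (iii) is immediate from (i): boundedness of $M_g$ on $\mathcal{F}_{\psi_m}^p$ forces $g=\alpha$ constant, so $M_g=\alpha I$; every nonzero $f\in\mathcal{F}_{\psi_m}^p$ is then an eigenvector with eigenvalue $\alpha$, giving $\sigma_p(M_g)=\{\alpha\}$, and since $M_g-\lambda I=(\alpha-\lambda)I$ is invertible precisely when $\lambda\neq\alpha$, also $\sigma(M_g)=\{\alpha\}$.
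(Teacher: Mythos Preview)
Your necessity argument in (i)--(ii) and the handling of (iii) match the paper's: the paper routes the test-function step through the Carleson-measure characterizations of weighted composition operators in \cite{TM4}, but unwinding those conditions yields exactly your Lemma~\ref{lem1}-plus-subharmonicity estimate, and (iii) is argued identically. Where you diverge is in the sufficiency and in the case $q<p$. For sufficiency in (i) you prove the embedding $\mathcal{F}_{\psi_m}^p\hookrightarrow\mathcal{F}_{\psi_m}^q$ directly from the pointwise bound, which is a clean self-contained alternative to the paper's appeal to \cite{TM4}. For (ii) the paper invokes an $L^{p/(p-q)}$-integrability condition on the Berezin transform (Theorem~3.3 of \cite{TM4}) that forces $\int_{\CC}|g|^{p}\,dA<\infty$ and hence $g\equiv 0$ in one stroke; you instead reduce $g$ to a constant via the same kernel test and then argue that $\mathcal{F}_{\psi_m}^p\not\hookrightarrow\mathcal{F}_{\psi_m}^q$. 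Your route avoids the external machinery, at the cost of needing this non-inclusion.

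There is, however, a gap in how you justify the non-inclusion. Lemma~\ref{lem1} gives only the \emph{near-diagonal lower bound} $|K_{(w,m)}(z)|\gtrsim e^{\psi_m(z)+\psi_m(w)}$ for $z\in D(w,\delta)$; it says nothing about the size of $|k_{w_j}^{(p)}(z)|e^{-\psi_m(z)}$ when $z$ is far from $w_j$. Consequently Lemma~\ref{lem1} together with the two-sided norm asymptotics cannot by themselves yield ``essentially disjoint support'', and neither inequality in $\bigl\|\sum_j c_j k_{w_j}^{(p)}\bigr\|_{(r,m)}^r\simeq\sum_j|c_j|^r$ follows from them alone: in both directions one must control the cross terms, which requires an \emph{off-diagonal} pointwise estimate of the type $|K_{(w,m)}(z)|\lesssim e^{\psi_m(z)+\psi_m(w)-c|z-w|^2}$ (available e.g.\ from \cite{ASDV}). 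With that extra input your lattice argument goes through. A quicker fix, still in the spirit of your approach, is to test on monomials: a Stirling computation gives $\|z^n\|_{(q,m)}/\|z^n\|_{(p,m)}\simeq n^{\frac{1}{2q}-\frac{1}{2p}}\to\infty$ for $q<p$, which rules out the embedding without any kernel decay estimates.
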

Like  that of the operator $I_g$, the lemma shows that there  exists no nontrivial multiplication operators $M_g$ acting between  the Fock--Sobolev spaces. This is rather due to the relation in \eqref{parts}, as will be also explained in the proof of \ref{thm2} in  Section \ref{partstwo}.
\begin{proof}
We observe that the multiplication operator $M_g$ is a special case of weighted composition operators $uC_\phi f(z)= u(z) f(\phi(z)); $ set $u= g$ and $\phi(z)=z.$  Several properties of  $uC_\phi$ have already  been described in
\cite{TM4} from which some will be used in our subsequent considerations.  Let us now assume that $0<p\leq q <\infty$. Then  by Theorem~3.1  of \cite{TM4}, $M_g: \mathcal{F}_{\psi_m}^p \to \ \mathcal{F}_{\psi_m}^q $ is bounded if and only
\begin{align}
\label{finite}
\sup_{w\in \CC} B_{m}(|g|^q)(w) = \sup_{w\in \CC} \int_{\CC}|k_{(w,m)}(z)|^q|g(z)|^{q}  e^{-q\psi_m(z)} dA(z)<\infty.
\end{align}
 To arrive at the desired conclusion, we may  proceed to investigate further the boundedness of the integral transform in   \eqref{finite}. To this end, assuming this condition, and applying \eqref{asymptotic} and  Lemma~\ref{lem1} we have
 \begin{align}
\label{bounded1}
\infty > \sup_{w\in \CC} \int_{\CC}\frac{ |k_{(w,m)}(z)|^q|g(z)|^{q} }{e^{q\psi_m(z)}} dA(z)
\geq \sup_{w\in \CC} \int_{D(w,\delta)} \frac{|k_{(w,m)}(z)|^q}{e^{q\psi_m(z)}} |g(z)|^{q} dA(z)\nonumber\\
\gtrsim\sup_{w\in \CC}\int_{D(w,\delta)}|g(z)|^q dA(z)
\end{align} for a small positive number $\delta$. By subharmonicity  of $|g|^q$, we further have
\begin{align}
\label{bounded2}
\infty >  \sup_{w\in \CC}\int_{D(w,\delta)}|g(z)|^q dA(z) \gtrsim   \sup_{w\in \CC} |g(w)|^{q}
\end{align} for all $w\in \CC$. From this we deduce  that
 $g$ is a bounded analytic  function on $\CC$. Then  Liouville's classical theorem forces it to be a constant.\\
Conversely, if $g$ is a constant, then the integral in \eqref{finite} is obviously finite since all the Fock--Sobolev spaces $\mathcal{F}_{\psi_m}^p$
contain the reproducing kernels (see \cite[Corollary~14]{RCKZ}).\\
A similar analysis shows that when $0<p\leq q<\infty$,  $M_g$ is compact   if and only if  $g$  is the zero  function.

(ii) When $0<q<p <\infty$, then an application of  Theorem~3.3 of \cite{TM4} ensures  that the boundedness and compactness properties of  $M_g: \mathcal{F}_{\psi_m}^p \to \mathcal{F}_{\psi_m}^q$ are equivalent  and  this happens  if and only if
\begin{align}
\label{bounded3}
\int_{\CC} \big(B_{m}(|g|^q)(w)\big)^{\frac{p}{p-q}} dA(w)\quad \quad \quad \quad \quad \quad \quad \quad \quad \quad\quad \quad \quad \quad \quad \quad \quad \nonumber\\
= \int_{\CC}\Bigg(\int_{\CC}|k_{(w,m)}(z)|^p|g(z)|^{p} e^{-q\psi_m(z)} dA(z)\Bigg)^{\frac{p}{p-q}} dA(w)<\infty.
\end{align}
Arguing as in the series of estimates leading to  \eqref{bounded1} and \eqref{bounded2}, condition \eqref{bounded3} implies
\begin{align*}
\int_{\CC} |g(w)|^pdA(w) \lesssim \int_{\CC} (B_{m}(|g|^q)(w))^{\frac{p}{p-q}} dA(w) <\infty,
\end{align*} and  from this we  conclude that $g$ is indeed  the zero function.

(iii) By part (i) of the lemma, the only bounded multiplication  operators are  the
multiplications by constant functions. It means that we are actually  dealing  with  constant multiples
of the identity operator, whose spectrum obviously consists of the multiplicative
constant.

\end{proof}
\begin{lemma}
\label{lem4}
Let $a, \lambda \in \CC, \  g(z)= az^2$  and assume that  $|\lambda| >2|a|$. If $f$ is an entire function such that $fe^{g/\lambda}$ belongs to  $\mathcal{F}_{\psi_m}^p$, then
\begin{align}
\int_{\CC} \big| e^{\frac{g(z)}{\lambda}} f(z)\big|^p e^{-p\psi_m(z)} dA(z) \lesssim  |f(0)|^p + \int_{\CC} \frac{\big|  f'(z)  e^{\frac{g(z)}{\lambda}}  \big|^p}{(1+\psi'_m(z))^p} e^{-p\psi_m(z)} dA(z).
\end{align}
\end{lemma}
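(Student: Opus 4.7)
My plan is to apply the Littlewood--Paley identity (Lemma~\ref{lem2}) to the auxiliary entire function $h(z)=f(z)e^{g(z)/\lambda}$ and then absorb the perturbation produced by the product rule using the hypothesis $|\lambda|>2|a|$. Since $g(0)=0$, we have $h(0)=f(0)$, so Lemma~\ref{lem2} expresses the left-hand side of the present lemma as
\[
|f(0)|^p+\int_{\CC}\frac{|h'(z)|^p(1+|z|)^p}{(1+|z|+||z|^2+|z|-m|)^p}\,e^{-p\psi_m(z)}\,dA(z),
\]
and the identity $1+|z|+||z|^2+|z|-m|=(1+|z|)(1+|\psi_m'(|z|)|)$ matches this denominator with the factor $(1+\psi_m'(z))^p$ appearing in the statement.

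Writing $h'=\bigl(f'+\tfrac{2az}{\lambda}f\bigr)e^{g/\lambda}$ and using the elementary bound $|A+B|^p\lesssim|A|^p+|B|^p$, the Paley integrand splits into the desired right-hand side plus an error term
\[
\mathcal{E}(f)\lesssim\Bigl(\frac{2|a|}{|\lambda|}\Bigr)^p\int_{\CC}\Bigl(\frac{|z|(1+|z|)}{1+|z|+||z|^2+|z|-m|}\Bigr)^p|fe^{g/\lambda}|^p e^{-p\psi_m}\,dA.
\]
A direct calculation shows that the inner ratio is uniformly bounded on $\CC$ and converges to $1$ as $|z|\to\infty$. Next I would split the integration domain into $\{|z|\leq R\}\cup\{|z|>R\}$ for large $R$: on the far region the effective absorption coefficient can be made arbitrarily close to $(2|a|/|\lambda|)^p<1$, permitting $\mathcal{E}(f)$ to be moved to the left-hand side, while the contribution from $\{|z|\leq R\}$ is a local integral bounded in terms of $|f(0)|^p$ and the right-hand side via the representation $h(z)=f(0)+\int_0^z h'(w)\,dw$ combined with the fact that the Paley denominator is bounded on the compact disk.

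The principal obstacle is the absorption step: the equivalence constants in Lemma~\ref{lem2} do not decay with $R$, so the splitting has to be ordered carefully in order to reach an effective coefficient strictly less than $1$ under the sharp threshold $|\lambda|>2|a|$. A convenient fallback, should the constants refuse to cooperate in this generality, is to first establish the inequality for polynomial $f$ — for which all the integrals in sight converge absolutely and the computation is direct — and then extend by a density argument in $\mathcal{F}_{\psi_m}^p$, invoking the standing hypothesis $fe^{g/\lambda}\in\mathcal{F}_{\psi_m}^p$.
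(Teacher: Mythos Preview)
Your identification of the ``principal obstacle'' is correct, and unfortunately neither of your proposed remedies closes the gap. Once you invoke Lemma~\ref{lem2} as a black box you inherit a fixed equivalence constant $C_1$, and the crude bound $|A+B|^p\le C_2(|A|^p+|B|^p)$ contributes another. The absorption inequality you actually arrive at is
\[
\text{LHS}\ \le\ C_1|f(0)|^p+C_1C_2\cdot(\text{good term})+C_1C_2\Bigl(\tfrac{2|a|}{|\lambda|}\Bigr)^p\sup_{|z|>R}\Bigl(\tfrac{|z|}{1+\psi_m'(|z|)}\Bigr)^p\cdot\text{LHS}+\text{(local)},
\]
and the factor in front of LHS is $C_1C_2(2|a|/|\lambda|)^p(1+o(1))$, not $(2|a|/|\lambda|)^p(1+o(1))$. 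Splitting the domain does nothing to $C_1$ or $C_2$; they are global constants coming from the Littlewood--Paley equivalence and the triangle-type inequality, not from the tail behaviour of the ratio. So under the \emph{sharp} hypothesis $|\lambda|>2|a|$ you cannot force the coefficient below $1$. The polynomial/density fallback is a red herring: the difficulty is a quantitative one about constants in an inequality, not an approximation or convergence issue, and it persists already for polynomial $f$.

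The paper circumvents this entirely by \emph{not} applying Lemma~\ref{lem2} to $h=fe^{g/\lambda}$. Instead, following Proposition~1 of \cite{Olivia2}, one sets the perturbed weight
\[
w(z)=p\,\Re\bigl(g(z)/\lambda\bigr)-p\,\psi_m(z),
\]
notes that $|\lambda|>2|a|$ is exactly the condition making $-w$ a Fock-type weight (it grows like $c|z|^2$ with $c=p(\tfrac12-|a|/|\lambda|)>0$), and then reruns the radial integration-by-parts / Hardy-type argument underlying the Littlewood--Paley formula directly for the pair $(f,e^{w})$. In that argument the dependence on $|a|/|\lambda|$ is explicit and the resulting constant is finite precisely when $|\lambda|>2|a|$; no absorption against unspecified equivalence constants is required. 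To repair your proof, you should open up the proof of Lemma~\ref{lem2} (equivalently Theorem~19 of \cite{Olivia}) rather than quote its statement.
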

 The proof  of the lemma follows from a simple variant of the proof of Proposition 1 in \cite{Olivia2}. We only need to set $\alpha=1$ and  replace
$\alpha |z|^A$ in there by $\psi_m(z)$  and reset $w(z)= p\Re(g(z)/\lambda)-p\psi_m(z)$ and run the arguments.
\section{Proof of the main results}
We now turn to the proofs of the main results of the paper. Let $0 < p,q < \infty $ and $\mu$ be a positive Borel measure on $\CC$. We call $\mu$ a  $(p, q)$ Fock--Carleson measure if  the inequality
\begin{align*}
\int_{\CC} |f(z)|^{q} e^{-\frac{q}{2}|z|^2} d\mu(z) \lesssim \|f\|_{(p,m)}^q,
\end{align*}  holds, and we call  it  a vanishing  $(p, q)$ Fock--Carleson measure if
\begin{align*}
\lim_{n\to \infty} \int_{\CC} |f_n(z)|^q e^{-\frac{q}{2}|z|^2} d\mu(z)= 0
\end{align*} for every uniformly bounded sequence $f_n$ in   $\mathcal{F}_{\psi_m}^p$   that converges to zero uniformly on compact subset of $\CC$ as $n\to \infty$. These measures have been completely identified in \cite{TM4}.\\
We observe  that by first setting $$d\mu_{(g, q)}(z)= \frac{|g'(z)|^q(1+|z|)^{qm+q }}{\big( 1+|z|+ \big| |z|^2+ |z|-m\big| \big)^q} dA(z), $$
    and applying \eqref{Paley},  we may write the norm of $V_gf$ as
  \begin{align*}
\|V_g f\|_{(q,m)}^{q}= \int_{\CC}\frac{ |g'(z)|^q|f(z))|^q (1+|z|)^{mq+q}}{\big( 1+|z|+ \big| |z|^2+ |z|-m\big| \big)^q} e^{-\frac{q}{2}|z|^2}dA(z)\quad \quad \quad \nonumber\\
=  \int_{\CC} |f(z)|^q e^{-\frac{q}{2}|z|^2} d \mu_{(g,q)}(z).
 \end{align*}
  In view of this,  it follows that $V_g:  \mathcal{F}_{\psi_m}^p \to \mathcal{F}_{\psi_m}^q $ is bounded (compact) if and only if
 $\mu_{(g,q)}$  is a (p,q) (vanishing) Fock-Carleson measure.  Consequently, if
   $0\leq p\leq q<\infty,$ then Theorem~2.1 of \cite{TM4} ensures that   $\mu_{(g,q)}$  is a (p,q)  Fock-Carleson measure  if and only if  $\tilde{\mu}_{(t, mq)}$ is bounded for some or any positive $t$ where
 \begin{align*}
\tilde{\mu}_{(t, mq)}(w)=\int_{\CC} \frac{e^{-\frac{t}{2}|z-w|^2}}{ (1+|z|)^{mq}} d\mu_{(g, q)}(z).
 \end{align*}
 Having singled out this equivalent reformulation, our next task will be to   investigate the new formulation further,  namely boundedness of the transform  $\tilde{\mu}_{(t, mq)}$. Let us  first assume its boundeness,   and show that $g$ is a complex polynomial of  degree  not exceeding two. To this end,
\begin{align*}
%\label{series}
\infty > \sup_{w\in \CC}\int_{\CC} \frac{e^{-\frac{t}{2}|z-w|^2}}{ (1+|z|)^{mq}} d\mu_{(g, q)}(z)=\sup_{w\in \CC} \int_{\CC}  \frac{e^{-\frac{t}{2}|z-w|^2} |g'(z)|^q(1+|z|)^{q }}{\big( 1+|z|+ \big| |z|^2+ |z|-m\big| \big)^q} dA(z)\nonumber\\
\gtrsim \sup_{w\in \CC} \int_{D(w, 1)} \frac{|g'(z)|^q(1+|z|)^{q }}{\big( 1+|z|+ \big| |z|^2+ |z|-m\big| \big)^q} dA(z)=S.
\end{align*}
Observe that whenever  $z$ belongs to the disk $ D(w,1)$, then
\begin{align}
\label{comparison}
\begin{cases}
1+|z| \simeq |+|w|  &\   \\
\ 1+|z|+\big| |z|+|z|^2-m\big| \simeq 1+|w|+\big| |w|+|w|^2-m\big|.
\end{cases}
\end{align}
This together with the subharmonicity of $|g'|^q$  implies that
\begin{align*}
S\gtrsim \frac{|g'(w)|^q(1+|w|)^{q }}{\big( 1+|w|+ \big| |w|^2+ |w|-m\big| \big)^q} ,
\end{align*}
  for all $w\in \CC$ and  our  assertion follows. \\
  On the other hand, if $g(z)= az^2+bz+c, \ a, b, c\in \CC$, then
  \begin{align*}
  \sup_{w\in \CC }\tilde{\mu}_{(t, mq)}(w)= \sup_{w\in \CC }\int_{\CC} e^{-\frac{t}{2}|z-w|^2}\frac{|2az+b|^q(1+|z|)^{q }}{\big( 1+|z|+ \big| |z|^2+ |z|-m\big| \big)^q}dA(z)\nonumber\\
   \lesssim \sup_{w\in \CC } \int_{\CC} e^{-\frac{t}{2}|z-w|^2}dA(z) < \infty,
\end{align*} and completes the proof of part (a)  of (i)  in the theorem.\\
Similarly,  for part (b),  for $0<p\leq q<\infty,$  by Theorem~2.2 of \cite{TM4}, $\mu_{(g,q)}$  is a (p,q) vanishing Fock-Carleson measure if and only if $\tilde{\mu}_{(t, mq)}(z) \to 0$ as $|z| \to \infty$. We may first assume this vanishing property and show that $g$ is a complex polynomial of degree not exceeding one. For this, following the same arguments as above, we easily see from our assumption that
\begin{align*}
\lim_{|w| \to \infty }\frac{|g'(w)|(1+|w|)}{1+|w|+ \big| |w|^2+ |w|-m\big| }= 0,
\end{align*}  and this obviously  holds only if $g'$  is a constant as asserted.\\
Conversely, if $g(z)= az+b, \ a, b, \in \CC$, then
\begin{align*}
\lim_{|w| \to \infty}\tilde{\mu}_{(q, mq)}(w)= \lim_{|w| \to \infty} \int_{\CC} e^{-\frac{q}{2}|z-w|^2}\frac{|a|^q(1+|z|)^{q }}{\big( 1+|z|+ \big| |z|^2+ |z|-m\big| \big)^q}dA(z)\nonumber\\
\lesssim  \lim_{|w| \to \infty} \int_{\CC} \frac{ e^{-\frac{q}{2}|z-w|^2}}{(1+|z|)^{q }}dA(z) \simeq \lim_{|w| \to \infty} (1+|w|)^{-q}= 0.
\end{align*}
ii) If $0<q<p<\infty$, then  by Theorem~2.3 of \cite{TM4} again, $V_g:  \mathcal{F}_{\psi_m}^p \to \mathcal{F}_{\psi_m}^q$ is bounded (compact) if and only if $\tilde{\mu}_{(t, mq)}$ belongs to $L^{\frac{p}{p-q}}(\CC, dA)$. We plan to show that  this holds if and only if $g$ is  of at most degree one and $q> \frac{2p}{p+2}$. To this end, applying \eqref{comparison} and subharmonicity of $|g'(w)|^{\frac{pq}{p-q}}$, we infer
\begin{align*}
\int_{\CC}|g'(w)|^{\frac{pq}{p-q}}\Bigg(\frac{1+|w| }{ 1+|w|+ \big| |w|^2+ |w|-m\big| }\Bigg)^{\frac{pq}{p-q}} dA(w) \quad \quad \quad \quad \quad \quad \quad \quad \quad \quad  \quad \quad \nonumber\\
\lesssim \int_{\CC}\Bigg( \int_{D(w, 1)}|g'(z)|^q e^{-\frac{q}{2}|z-w|^2} \Bigg(\frac{1+|z| }{ 1+|z|+ \big| |z|^2+ |z|-m\big| }\Bigg)^{q} dA(z)\Bigg)^{\frac{p}{p-q}} dA(w)\nonumber\\
\leq \int_{\CC}\bigg( \int_{\CC}|g'(z)|^q e^{-\frac{q}{2}|z-w|^2} \bigg(\frac{1+|z| }{ 1+|z|+ \big| |z|^2+ |z|-m\big| }\bigg)^{q} dA(z)\bigg)^{\frac{p}{p-q}} dA(w)\nonumber\\
\simeq  \int_{\CC} \tilde{\mu}_{(q, mq)}^{\frac{p}{p-q}} (w)dA(w) <\infty,
\end{align*}
from which we conclude that $g'$ must be a constant. In addition, if $g'$ is a nonzero constant, the above holds only if $\frac{pq}{p-q}>2.$ \\
Conversely, assuming that $g'\simeq \alpha=$ constant  we have
\begin{align*}
 \int_{\CC} \tilde{\mu}_{(q, mq)}^{\frac{p}{p-q}} (w)dA(w) \simeq \int_{\CC}\Bigg( \int_{\CC} \Bigg(\frac{|\alpha| e^{-\frac{1}{2}|z-w|^2} (1+|z|) }{ 1+|z|+ \big| |z|^2+ |z|-m\big| }\Bigg)^{q} dA(z)\Bigg)^{\frac{p}{p-q}} dA(w)\nonumber\\
 \lesssim  \int_{\CC}  \Bigg( \int_{\CC} \frac{|\alpha|^qe^{-\frac{q}{2}|z-w|^2}}{(1+|z|)^q} dA(z)\Bigg)^{\frac{p}{p-q}} dA(w)
 \simeq \int_{\CC} |\alpha|^q(1+|w|)^{-\frac{pq}{p-q}} dA(w) <\infty,
\end{align*} where the last integral converges since either $\frac{pq}{p-q}>2$ or $\alpha=0$ by our assumption.

(iii) Let us now turn to the Schatten class membership of $V_g$. We recall that a compact  operator  $V_g$ belongs
to the Schatten $\mathcal{S}_p(\mathcal{F}_{\psi_m}^{2})$ class if and only if the sequence of the eigenvalues of  the positive operator $(V_g^*V_g)^{1/2}$  is  $\ell^p$ summable.  In particular when $p\geq 2,$ this happens  if and only if
\begin{align}
\label{sum}
\sum_{n=1}^\infty \|Te_n\|_{(2,m)}^p <\infty
\end{align} for any orthonormal basis $(e_n)$ of  $\mathcal{F}_{\psi_m}^{2}$ (see \cite[Theorem 1.33]{KZH1}).  Let us assume that  $V_g$ is compact, that is $g'$ is a constant.  Then for
 $ p>2$, applying \eqref{sum}, \eqref{Paley},  and H\"{o}lder's inequality,  and subsequently \eqref{kernel} and \eqref{asymptotic}, we  compute
\begin{align*}
\sum_{n=1}^\infty \|V_g e_n\|_{(2, m)}^p \simeq \sum_{n=1}^\infty \Bigg( \int_{\CC} \bigg(\frac{|e_n(z)| e^{-\psi_m(z)}(1+|z|) }{ 1+|z|+ \big| |z|^2+ |z|-m\big| }\bigg)^{2} dA(z) \Bigg)^{\frac{p}{2}} \quad \quad \quad \quad  \quad \quad \quad \quad \nonumber\\
 \leq \sum_{n=1}^\infty  \Bigg( \int_{\CC}\frac{|e_n(z)|^2 (1+|z|)^2}{ e^{2\psi_m(z)} }   dA(z)\Bigg)^{\frac{p-2}{2}}
 \int_{\CC} \frac{ |e_n(z)|^2 e^{-2\psi_m(z)} \big(1+|z|) ^2}{(1+|z|+ \big| |z|+|z|^2 -m\big|\big)^p}dA(z)\ \ \quad  \nonumber\\
\simeq \sum_{n=1}^\infty \int_{\CC} \frac{ |e_n(z)|^2 e^{-2\psi_m(z)}   \big(1+|z|)^2}{(1+|z|+ \big| |z|+|z|^2 -m\big|\big)^p}dA(z)\quad \quad\nonumber\\
\simeq \int_{\CC}\Big(1+|z|+ \big| |z|+|z|^2 -m\big|\Big)^{-p}dA(z)<\infty.\quad \quad \quad \quad
\end{align*}
On the other hand, if p= 2 and $g'= \alpha=$ constant, then
\begin{align}
\sum_{n=1}^\infty \|V_g e_n\|_{(2, m)}^2 \simeq \sum_{n=1}^\infty \int_{\CC}\Bigg(\frac{ |g'(z)||e_n(z)| e^{-\psi_m(z)}(1+|z|) }{ 1+|z|+ \big| |z|^2+ |z|-m\big| }\Bigg)^{2}dA(z) \quad \quad \nonumber\\
 \simeq \int_{\CC}|\alpha|^2 \big(1+|z|+ \big| |z|^2+ |z|-m\big|\big)^{-2}dA(z).\nonumber
\end{align}The last integral above is finite if and only if $\alpha= 0$, and hence $g$ is a constant. The same conclusion holds for the case when $0<p<2$ by the monotonicity property of Schatten class membership, in the sense that
$\mathcal{S}_p(\mathcal{F}_{\psi_m}^2) \subseteq \mathcal{S}_2(\mathcal{F}_{\psi_m}^2),$ for all $p\leq2$.\\
\textbf{Remark 1:} Because of Lemma~\ref{lem2}, it is tempting to prove Theorem~\ref{thm1} by first  setting
\begin{align*}
\int_{\CC} |V_gf(z)|^q e^{-q\psi_m(z)} dA(z) \simeq \int_{\CC} |f(z)|^q \frac{|g'(z)|^q(1+|z|)^q  e^{-q\psi_m(z)}}{\big(1+|z|+\big| |z|+|z|^2-m\big|\big)^q} dA(z)\nonumber\\
\simeq \int_{\CC} |M_{h}f(z)|^qe^{-q\psi_m(z)} dA(z),
\end{align*} where $$h(z)= \frac{|g'(z)|(1+||z)}{1+|z|+\big| |z|+|z|^2-m\big|},$$ and then apply  Lemma~\ref{lem3} with $h$ as the multiplier function. Unfortunately, this approach is not valid as the  lemma on the multiplication operator can not be  directly applied; since the  analyticity property of $g$ is heavily  used in its proof, while $h$ fails to be analytic in here.
\subsection{Proof of Theorem~\ref{thm2}}\label{partstwo}
We note that relation \eqref{parts} ensures that  if any two of the operators are bounded (compact), so is  the third one.  In view of this, $I_g: \mathcal{F}_{\psi_m}^p \to \mathcal{F}_{\psi_m}^q$ is bounded (compact)  if both $M_g$ and $V_g$ are bounded (compact).  By Theorem~\ref{thm1} and Lemma~\ref{lem3}, this happens if and only if  $g$ is a constant function.  This obviously gives the sufficiency part of the conditions in the theorem.  We  proceed to show that it is also necessary. First from Lemma~\ref{lem1}, Cauchy--Schawarz inequality and \eqref{asymptotic}, observe that for each $z\in D(w, \delta)$ and a small positive $\delta$;
\begin{align*}
| K_{(w, m)}(z)| \simeq e^{\psi_m(z)+\psi_m(w)}.
\end{align*}
 Now assuming that $I_g: \mathcal{F}_{\psi_m}^p \to \mathcal{F}_{\psi_m}^q$ is bounded and  $0<p\leq q<\infty. $ Then,  applying \eqref{Paley}, \eqref{forall}  we have
\begin{align*}
e^{q\psi_m(w)} \gtrsim \int_{\CC} |I_g K_{(w, m)}(z)|^q  e^{-q\psi_m(z)} dA(z) \quad \quad \quad \quad \quad \quad  \quad \quad \quad   \quad \quad \quad \quad \nonumber\\
\simeq \int_{\CC} \frac{|K'_{(w, m)}(z)|^q  |g(z)|^q (1+|z|)^{q}}{\big(1+|z|+\big||z|+|z^2|-m\big|\big)^q} e^{-q\psi_m(z)} dA(z)\nonumber\\
\geq \int_{D(w,\delta)}\frac{|K'_{(w, m)}(z)|^q  |g(z)|^q (1+|z|)^{q}}{\big(1+|z|+\big||z|+|z^2|-m\big|\big)^q} e^{-q\psi_m(z)} dA(z)\nonumber\\
\gtrsim \int_{D(w,\delta)}\frac{e^{q\psi_m(w)}|\psi'_m(z)|^q |g(z)|^q (1+|z|)^{q}}{\big(1+|z|+\big||z|+|z^2|-m\big|\big)^q}  dA(z)= S_1.
\end{align*}
On the other hand $ \psi_m'(z)\simeq \psi_m(w)$ for each $z\in D(w, \delta)$. Applying  this, \eqref{comparison}, and the subharmonicity of $|g|^q$, we estimate $S_1$ from below as
\begin{align*}
S_1\gtrsim  \frac{e^{q\psi_m(w)}|\psi'_m(w)|^q  |g(w)|^q (1+|w|)^{q}}{\big(1+|w|+\big||w|+|w^2|-m\big|\big)^q}  \simeq  \frac{e^{q\psi(w)}\Big( |w|+|w|^2 -m\Big)^q |g(w)|^q}{\big(1+|w|+\big||w|+|w^2|-m\big|\big)^q},
\end{align*} from which and taking further simplifications, we infer
\begin{align*}
|g(w)| \lesssim \frac{1+|w|}{|w|+|w|^2-m}+ 1,
\end{align*} and hence g is a bounded analytic function.  By  Liouville's  classical theorem, $g$ turns out to be a constant function.
\subsection{Proof of Theorem~\ref{thm3}}
Recall that $\lambda \in \CC$  belongs to the spectrum $\sigma(T)$ of a bounded  operator $T$ on a  Banach space if  $\lambda I-T$ fails to be invertible, where $I$ is the identity operator on the space. The point spectrum $\sigma_p(T)$  of $T$ consists of  its eigenvalues.  We now turn to the spectrum of $V_g$ in particular, and  assume that  $V_g$ is bounded on $\mathcal{F}_{\psi_m}^p$ and hence $g(z)= az^2+ bz+c, \ \  a, b, c \in \CC.$ By linearity of integrals we  may first  make a  splitting  $\lambda I-V_g = (\lambda I-V_{g_{1}})-V_{g_{2}}$ where $g_{1}(z)= az^2$ and $g_{2}(z)= bz+c$. A simple analysis shows that $\lambda I- V_g$ and $ \lambda I- V_{g_1}$ are injective maps. On the other hand, by part (i) of the result, $V_{g_{2}}$ is compact and hence $\sigma (V_{g_{2}})= \{0\}.$  Thus, we shall  investigate the case with $V_{g_{1}}$.  We may first observe that  if  $\lambda\neq 0,$  then the equation  $\lambda f-V_g f= h$  has the unique analytic solution
\begin{align}
\label{first}
f(z)=(\lambda I- V_{g_1})^{-1}h(z)= \frac{1}{\lambda} h(0) e^{\frac{g_1(z)}{\lambda}} + \frac{1}{\lambda}  e^{\frac{g_1(z)}{\lambda}} \int_{0}^z  e^{-\frac{g_1(w)}{\lambda}} h'(w) dA(w),
\end{align} where I is the identity operator.   This can easily be  seen by solving an initial valued  first order linear ordinary differential equation
 $$ \lambda y'- g_1'y= h', \ \ \lambda f(0)= h(0). $$
 Recall that $(\lambda I- V_{g_1})^{-1}h(z)=   R_{(g_1,\lambda)} h(z)$
  is the Resolvent operator of $V_{g{1}}$ at $\lambda$. It follows that $\lambda \in \CC$ belongs to the resolvent of $V_{g_1}$ whenever  $R_{(g_1,\lambda)}$ is a bounded operator. Since we assumed that $V_{g_1}$ is bounded and as  $\mathcal{F}_{\psi_m}^p $ contain the  constants, setting $h= 1$ in \eqref{first} shows that $R_{(g_1,\lambda)} 1= e^{g_1(z)/\lambda} \in \mathcal{F}_{\psi_m}^p $  for each $\lambda$ in the resolvent set of $V_{g_1}$. From this, we obviously deduce
      \begin{align*}
   \sigma(V_{g_1}) \supseteq \{0\}\cup \overline{ \{\lambda \in \CC\setminus\{0\}:e^{g_1(z)/\lambda} \notin \mathcal{F}_{\psi_m}^p\} }.
   \end{align*}
On the other hand, if $|\lambda|> 2|a|$, then we set polar coordinates for  $z= re^{i\theta}, \ \ a= |a|e^{i\theta_1}, \ \ \lambda= |\lambda|e^{i\theta_2}$,  and estimate
\begin{align*}
\int_{\CC} |R_{(g_1,\lambda)} 1(z)|^p e^{-p\psi_m(z)}dA(z)= \int_{\CC} e^{p \Re\big(\frac{az^2}{\lambda}\big)-p\psi_m(z)} dA(z)\quad \quad \quad \quad \\
=\int_{0}^\infty \int_{0}^{2\pi} e^{ p\big( \frac{|a|}{|\lambda|} \cos(\theta+ \theta_1-\theta_1)-\frac{1}{2}\big)r^2 + mp\log(1+r) } r d\theta dr\quad \quad \quad \\
\lesssim \int_{0}^\infty  e^{ p\big( \frac{|a|}{|\lambda|} -\frac{1}{2}\big)r^2 + (m+1)p\log(1+r) } dr
\lesssim \int_{0}^\infty  e^{ p\big( \frac{|a|}{|\lambda|} -\frac{1}{2}\big)r^2 + p(m+1)r } dr\\
\leq \sqrt{ \frac{2\pi |\lambda|}{p(2|a|-|\lambda|)} } e^{\frac{2|\lambda|(pm+p)^2 }{p(2|a|-|\lambda|)}} <\infty.\quad \quad \quad
\end{align*}
 This means that  the spectrum of $V_{g_1}$ contains the closed disc $\overline{D(0, 2|a|)}.$  We remain to show that $R_{(g_1, \lambda)}$ is bounded  for all $\lambda \in \CC$ such that $|\lambda|\leq 2|a|$.  To this end, applying Lemma~\ref{lem2} and Lemma~\ref{lem4}, we have
 \begin{align*}
 \int_{\CC} |R_{(g_1,\lambda)}f(z)|^p e^{-p\psi_m(z)} dA(z) \leq  2^p |f(0)|^p\int_{\CC} |e^\frac{g_1(z)}{\lambda}|^p e^{-p\psi_m(z)}dA(z) \quad \quad \quad \nonumber\\
 + 2^p  \int_{\CC} \bigg|e^{-\frac{g_1(z)}{\lambda}}\int_{0}^z e^{-\frac{g_1(w)}{\lambda}}f'(w)dA(w)\bigg|^p e^{-p\psi_m(z)} dA(z)\nonumber\\
 \lesssim \|f\|_{(p,m)}^p + \int_{\CC}\frac{|f'(z)|^pe^{-p\psi_(z)}}{(1+\psi'_m(z))} dA(z) \lesssim  \|f\|_{(p,m)}^p,
 \end{align*} and completes the proof of par(t i).

  The proof of part (ii) is rather  straightforward. If $\lambda$ belongs to the  point spectrum of $I_g$, then there exists a nonzero function $f \in\mathcal{F}_{\psi_m}^p$ for which
 \begin{align}
 \label{easy}
 \lambda f(z)= \int_{0}^z c f'(w)dA(w)
 \end{align} It follows from this that $\lambda f'(z)= cf'(z)$ which holds either $\lambda= c$ or  $f'= 0$. The later leads to  a contradiction
 because of the relation in  \eqref{easy}. Thus, we  must have $\lambda= c$. This implies
 $$\{c\}\subseteq \sigma(I_g).$$
 To show the converse inclusion, it suffices to show that the resolvent operator $R_{(\lambda,g)}$ of  $I_g$ at point $\lambda$ is  bounded on $\mathcal{F}_{\psi_m}^p$  for each $\lambda \neq c.$ To this end, from the relation $\lambda f-I_g f= h,$ it follows that
 $$ \lambda f'-cf'= h'.$$
 Solving this linear ordinary differential equation gives the explicit expression for the resolvent operator
\begin{align*}
f(z)= R_\lambda h(z)= \frac{h(z)}{\lambda-c}, \end{align*} which obviously is bounded on $\mathcal{F}_{\psi_m}^p$ and completes the required proof.
 \subsection{The differential operator D} The  differential operator $Df= f'$ has become a prototype example of unbounded operators  in many Banach spaces. Its unboundedness in the classical Flock spaces with Gaussian weight and in weighted Fock spaces where the weight decays faster than the Gaussian weight  was recently verified in \cite{TM3}. Another natural question would be then what happens when the weight decays slower than the Gaussian weight in which the Fock--Sobolev spaces constitute  typical examples. In what follows we will verify that the action of  the  operator remains unbounded.
If $D:\mathcal{F}_{\psi_m}^p \to \mathcal{F}_{\psi_m}^q $  were indeed bounded, then applying $D$ to the sequence of the reproducing kernels, using estimates  \eqref{asymptotic} and \eqref{forall},  and subharmonicity of $| K'_{(w, m)}|^q$, we would find
\begin{align*}
e^{q\psi_m(w)} \gtrsim  \| K'_{(w, m)}\|_{(p,m)}^q \|D\|^q \geq \int_{\CC} | K'_{(w, m)}(z)|^q  e^{-q\psi_m(z)} dA(z)\nonumber\\
 \geq \int_{D(w,1)}| K'_{(w, m)}(z)|^q  e^{-q\psi_m(z)} dA(z)  \nonumber\\
\gtrsim | K'_{(w, m)}(w)|^q  e^{-q\psi_m(w)} \simeq  \psi'(w) e^{q\psi_m(w)}
\end{align*} and from this we conclude
$ ||w|+|w|^2 -m|\lesssim 1+|w|$,
 resulting  the desired contradiction when $|w| \to \infty.$

\end{document}